\newtheorem{Theorem}{Theorem}[section]
\newtheorem{Lemma}[Theorem]{Lemma}
\renewcommand{\phi}{\varphi}
\newcommand{\C}{\operatorname{C}}
\newcommand{\A}{\operatorname{A}}
\newcommand{\N}{\operatorname{N}}
\newcommand{\Z}{\operatorname{Z}}
\newcommand{\cohom}{\operatorname{H}}
\newcommand{\Aut}{\operatorname{Aut}}
\newcommand{\Out}{\operatorname{Out}}
\newcommand{\pcore}{\operatorname{O}}
\newcommand{\GL}{\operatorname{GL}}
\newcommand{\Irr}{\operatorname{Irr}}
\newcommand{\Gal}{\operatorname{Gal}}
\mathchardef\ordinarycolon\mathcode`\:  
\title{Blocks with defect group $D_{2^n}\times C_{2^m}$}
\author{
Benjamin Sambale\\
Mathematisches Institut\\
Friedrich-Schiller-Universität\\
07743 Jena\\
Germany\\
{\tt benjamin.sambale@uni-jena.de}
}
\date{\today}
\begin{document}
\frenchspacing
\maketitle
\begin{abstract}\noindent
We determine the numerical invariants of blocks with defect group $D_{2^n}\times C_{2^m}$, where $D_{2^n}$ denotes a dihedral group of order $2^n$ and $C_{2^m}$ denotes a cyclic group of order $2^m$. This generalizes Brauer's results \cite{Brauer} for $m=0$. As a consequence, we prove Brauer's $k(B)$-conjecture, Olsson's conjecture (and more generally Eaton's conjecture), Brauer's height zero conjecture, the Alperin-McKay conjecture, Alperin's weight conjecture and Robinson's ordinary weight conjecture for these blocks. Moreover, we show that the gluing problem has a unique solution in this case.
\end{abstract}

\textbf{Keywords:} $2$-blocks, dihedral defect groups, Alperin's weight conjecture, ordinary weight conjecture\\
\textbf{AMS classification:} 20C15, 20C20

\section{Introduction}
Let $R$ be a discrete complete valuation ring with quotient field $K$ of characteristic $0$. Moreover, let $(\pi)$ be the maximal ideal of $R$ and $F:=R/(\pi)$. We assume that $F$ is algebraically closed of characteristic $2$.
We fix a finite group $G$, and assume that $K$ contains all $|G|$-th roots of unity. Let $B$ be a $2$-block of $RG$ with defect group $D$. We denote the number of irreducible ordinary characters of $B$ by $k(B)$. These characters split in $k_i(B)$ characters of height $i\in\mathbb{N}_0$. Here the \emph{height} of a character $\chi$ in $B$ is the largest integer $h(\chi)\ge 0$ such that $2^{h(\chi)}|G:D|_2\mathrel{\big|}\chi(1)$, where $|G:D|_2$ denotes the highest $2$-power dividing $|G:D|$.
Finally, let $l(B)$ be the number of irreducible Brauer characters of $B$.

If $D$ is a dihedral group, then all invariants of $B$ are known (see \cite{Brauer}). Thus, it seems natural to consider the case, where $D$ is a direct product of a dihedral group and a cyclic group. We write
\[D:=\langle x,y,z\mid x^{2^{n-1}}=y^2=z^{2^m}=[x,z]=[y,z]=1,\ yxy^{-1}=x^{-1}\rangle=\langle x,y\rangle\times\langle z\rangle\cong D_{2^n}\times C_{2^m},\]
where $n\ge 2$ and $m\ge0$. In the case $n=2$ and $m=0$ we get a four-group. Then the invariants of $B$ have been known for a long time. If $n=2$ and $m=1$, $D$ is elementary abelian of order $8$, and the block invariants are also known (see \cite{KKL}).
Finally, in the case $n=2\le m$ there exists a perfect isometry between $B$ and its Brauer correspondent (see \cite{UsamiZ4}). Thus, also in this case the block invariants are known, and the major conjectures are satisfied. Hence, we assume $n\ge 3$ for the rest of the paper. We allow $m=0$, since the results are completely consistent in this case.

In contrast to Brauer's work we use a more modern language and give shorter proofs. In addition we apply the theory of lower defect groups and the theory of centrally controlled blocks (see \cite{KuelshammerOkuyama}). The main reason that these blocks are accessible lies in the fact that certain inequalities for $k(B)$ and $k_i(B)$ are sharp.

\section{Subsections}
\begin{Lemma}\label{aut}
The automorphism group $\Aut(D)$ is a $2$-group.
\end{Lemma}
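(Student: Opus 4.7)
The plan is to factor the problem through the Frattini quotient. Write $\rho\colon \Aut(D)\to\Aut(D/\Phi(D))$ for the canonical restriction. Since $D$ is a finite $2$-group, the kernel of $\rho$ is itself a $2$-group by a standard argument (elements acting trivially on $D/\Phi(D)$ are unipotent on every Frattini-type quotient). Thus it suffices to show that the image of $\rho$ is a $2$-group. Since $\Phi(D)=\langle x^2,z^2\rangle$, we have $D/\Phi(D)\cong\mathbb{F}_2^r$ with $r=2$ if $m=0$ and $r=3$ if $m\ge 1$, and hence $\Aut(D/\Phi(D))=\GL_r(\mathbb{F}_2)$.

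To control this image, I would use the observation that any $\sigma\in\Aut(D)$ permutes the cosets of $\Phi(D)$ in $D$ preserving, for each coset $C$, the number of involutions contained in $C$. This yields a characteristic invariant on the non-zero vectors of $D/\Phi(D)$. The decisive calculation, valid since $n\ge 3$, is that every element of the coset $x\Phi(D)$ has order at least $2^{n-1}\ge 4$, so $x\Phi(D)$ contains no involutions at all, while $y\Phi(D)$ and $xy\Phi(D)$ consist essentially of involutions, and $\Phi(D)$ itself contains at most three.

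A direct enumeration of involution counts across the cosets then shows that $\rho(\Aut(D))$ must stabilize a flag in $D/\Phi(D)$. For $m=0$ the vector $\bar x$ is the unique non-zero coset without involutions, so $\rho(\Aut(D))$ fixes $\bar x$ and lies in a subgroup of $\GL_2(\mathbb{F}_2)=S_3$ of order $2$. For $m\ge 1$ the analogous analysis---combined with the distinguished role of $\bar z$ when $m\ge 2$, or of the preserved pair $\{\bar x,\bar{xz}\}$ when $m=1$---forces $\rho(\Aut(D))$ into a Borel subgroup of $\GL_3(\mathbb{F}_2)$, which is a Sylow $2$-subgroup of order $8$. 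In either case the image, and therefore $\Aut(D)$, is a $2$-group.

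The main technical obstacle is the case $m=1$, where $\Phi(D)$ is smaller and involutions are spread across more cosets in less transparent patterns; one must enumerate them carefully to verify that the $\{\bar x,\bar{xz}\}$-stabilizer is still a $2$-group. The cases $m=0$ and $m\ge 2$ are cleaner once the involution-free cosets have been identified.
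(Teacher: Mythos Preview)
Your overall strategy---reduce to $\Aut(D/\Phi(D))$ via the standard fact that the kernel of $\rho$ is a $2$-group, then bound the image by coset invariants---is sound and leads to a correct proof. It is, however, more laborious than the paper's argument. The paper simply observes that the chain
\[\Phi(D)<\Phi(D)\Z(D)<\langle x,z\rangle<D\]
is characteristic (for $m\ge 1$); since each successive quotient has order $2$, every automorphism acts on $D/\Phi(D)$ through the Borel subgroup of $\GL_3(\mathbb{F}_2)$, and one concludes immediately by the Burnside basis theorem. Your involution-counting is, in effect, a verification that this chain is characteristic, carried out coset by coset rather than stated structurally.

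One small correction: for $m\ge 2$ the coset $\bar z$ is \emph{not} distinguished by your involution invariant---the five cosets $\bar x,\bar z,\overline{xz},\overline{yz},\overline{xyz}$ all contain zero involutions. What \emph{is} distinguished is the pair $\{\bar y,\overline{xy}\}$: these are the only nontrivial cosets containing involutions at all (each contains $2^{n-1}\ge 4>3$ of them). The setwise stabiliser of $\{\bar y,\overline{xy}\}$ in $\GL_3(\mathbb{F}_2)$ fixes $\bar x=\bar y+\overline{xy}$ and preserves $\langle\bar x,\bar y\rangle$, hence lies in a Borel and has order $8$. So your conclusion survives, but via $\{\bar y,\overline{xy}\}$ rather than $\bar z$. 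Your treatment of $m=1$ (using $\{\bar x,\overline{xz}\}$) and of $m=0$ is correct as stated.
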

\begin{proof}
This is known for $m=0$. For $m\ge1$ the subgroups $\Phi(D)<\Phi(D)\Z(D)<\langle x,z\rangle<D$ are characteristic in $D$. By Theorem~5.3.2 in \cite{Gorenstein} every automorphism of $\Aut(D)$ of odd order acts trivially on $D/\Phi(D)$. The claim follows from Theorem~5.1.4 in \cite{Gorenstein}.
\end{proof}

It follows that the inertial index $e(B)$ of $B$ equals $1$. Now we investigate the fusion system $\mathcal{F}$ of the $B$-subpairs. For this we use the notation of \cite{Olssonsubpairs,Linckelmann}, and we assume that the reader is familiar with these articles. Let $b_D$ be a Brauer correspondent of $B$ in $RD\C_G(D)$. Then for every subgroup $Q\le D$ there is a unique block $b_Q$ of $RQ\C_G(Q)$ such that $(Q,b_Q)\le(D,b_D)$. We denote the inertial group of $b_Q$ in $\N_G(Q)$ by $\N_G(Q,b_Q)$.

\begin{Lemma}\label{essential}
Let $Q_1:=\langle x^{2^{n-2}},y,z\rangle\cong C_2^2\times C_{2^m}$ and $Q_2:=\langle x^{2^{n-2}},xy,z\rangle\cong C_2^2\times C_{2^m}$. 
Then $Q_1$ and $Q_2$ are the only candidates for proper $\mathcal{F}$-centric, $\mathcal{F}$-radical subgroups up to conjugation. In particular the fusion of subpairs is controlled by $\N_G(Q_1,b_{Q_1})\cup\N_G(Q_2,b_{Q_2})\cup D$.
Moreover, one of the following cases occurs:
\begin{enumerate}
\item[(aa)] $\N_G(Q_1,b_{Q_1})/\C_G(Q_1)\cong S_3$ and $\N_G(Q_2,b_{Q_2})/\C_G(Q_2)\cong S_3$.
\item[(ab)] $\N_G(Q_1,b_{Q_1})=\N_D(Q_1)\C_G(Q_1)$ and $\N_G(Q_2,b_{Q_2})/\C_G(Q_2)\cong S_3$.
\item[(ba)] $\N_G(Q_1,b_{Q_1})/\C_G(Q_1)\cong S_3$ and $\N_G(Q_2,b_{Q_2})=\N_D(Q_2)\C_G(Q_2)$.
\item[(bb)] $\N_G(Q_1,b_{Q_1})=\N_D(Q_1)\C_G(Q_1)$ and $\N_G(Q_2,b_{Q_2})=\N_D(Q_2)\C_G(Q_2)$.
\end{enumerate}
In case (bb) the block $B$ is nilpotent.
\end{Lemma}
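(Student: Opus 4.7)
The plan is to combine the structure of $D$ with Lemma~\ref{aut} to identify all $\mathcal{F}$-centric, $\mathcal{F}$-radical subgroups, then analyze the possibilities for the automorphism group at each. First I would observe that since $\langle z\rangle\le\Z(D)$ lies in the centralizer of every subgroup of $D$, any $\mathcal{F}$-centric subgroup $Q$ contains $\langle z\rangle$ and decomposes as $Q=P\times\langle z\rangle$ with $P:=Q\cap\langle x,y\rangle$ centric in $D_{2^n}$. The centric subgroups of $D_{2^n}$ (with $n\ge 3$) are easily enumerated: $D_{2^n}$ itself, the maximal cyclic $\langle x\rangle$, and the dihedral subgroups $\langle x^{2^k},x^j y\rangle$ for $0\le k\le n-2$; the case $k=n-2$ yields the two Klein four-groups $V_1$ and $V_2$, giving $Q_1$ and $Q_2$.

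Next I would eliminate all candidates other than $Q_1,Q_2$ by showing $\Aut(P\times\langle z\rangle)$ is a $2$-group in the remaining cases. For $P=\langle x\rangle$ this is immediate ($Q$ is a product of cyclic $2$-groups). For $P\cong D_{2^r}$ with $r\ge 3$, any automorphism of $Q$ stabilises the characteristic subgroups $\Z(Q)=\Z(P)\times\langle z\rangle\cong C_2\times C_{2^m}$ and $Q/\Z(Q)\cong D_{2^{r-1}}$; since $\Aut(D_{2^{r-1}})$, $\Aut(C_2\times C_{2^m})$ and the relevant $\Hom$-groups between them are all $2$-groups, so is $\Aut(Q)$. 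Thus $\Out_{\mathcal{F}}(Q)$ is a $2$-group for such $Q$, which cannot contain a strongly $2$-embedded subgroup, so $Q$ is not $\mathcal{F}$-radical. The surviving $Q_1,Q_2$ are non-$D$-conjugate (since $V_1,V_2$ are already non-conjugate in $D_{2^n}$ for $n\ge 3$), and the control statement then follows from Alperin's fusion theorem, together with $\Aut_{\mathcal{F}}(D)=\Aut_D(D)$, which holds because $e(B)=1$.

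For the dichotomy on each $Q_i$, I compute $\Aut_D(Q_i)=\N_D(Q_i)/\C_D(Q_i)\cong D_8/V_i\cong C_2$, which by saturation is a Sylow $2$-subgroup of $\Aut_{\mathcal{F}}(Q_i)$. The crucial observation is that $\langle z\rangle$ is $\mathcal{F}$-central: the automorphism group of the Brauer pair $(\langle z\rangle,b_{\langle z\rangle})$ embeds in $\Aut(\langle z\rangle)$, which is a $2$-group, so every odd-order $\mathcal{F}$-automorphism of $Q_i$ fixes $\langle z\rangle$ pointwise. The induced action on $Q_i/\langle z\rangle\cong V_i$ therefore lies in $\Aut(V_i)\cong S_3$, and the only subgroups of $S_3$ containing the given Sylow involution are $C_2$ and $S_3$. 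This gives exactly two possibilities per $Q_i$, hence the four cases (aa), (ab), (ba), (bb).

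The main obstacle, I expect, is the step that $\langle z\rangle$ is $\mathcal{F}$-central: for $m\ge 1$ one must exclude that an odd-order fusion automorphism moves $\langle z\rangle$ to another cyclic $2^m$-subgroup of $Q_i$, which requires tracking the Brauer subpair structure under conjugation. Finally, in case (bb) we have $\Aut_{\mathcal{F}}(Q_i)=\Aut_D(Q_i)$ for both $i$, so by the control statement every morphism in $\mathcal{F}$ is realised by conjugation inside $D$; hence $\mathcal{F}=\mathcal{F}_D(D)$ and $B$ is nilpotent by the characterisation of Broué–Puig.
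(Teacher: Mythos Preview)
Your overall strategy is sound, but two steps contain genuine gaps.

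First, the elimination of $Q=\langle x,z\rangle$ is not ``immediate''. The parenthetical justification (``$Q$ is a product of cyclic $2$-groups'') cannot be right as stated---$Q_1$ and $Q_2$ are themselves such products. Concretely, when $m=n-1$ the group $\langle x,z\rangle\cong C_{2^{n-1}}\times C_{2^{n-1}}$ is homocyclic and $\Aut(Q)$ does contain elements of order $3$. (A similar slip occurs in your dihedral case: $\Aut(\Z(Q))=\Aut(C_2\times C_{2^m})$ is not a $2$-group when $m=1$; there you can simply invoke Lemma~\ref{aut} applied to $D_{2^r}\times C_{2^m}$.) The paper excludes $\langle x,z\rangle$ by a different mechanism: since $\Omega(Q)\subseteq\Z(D)$, the image of $D$ in $\Aut_{\mathcal{F}}(Q)$ acts trivially on $\Omega(Q)$, whereas every odd-order automorphism of $Q$ acts nontrivially on $\Omega(Q)$; hence the image of $D$ lies in a proper normal subgroup, forcing $\pcore_2(\Aut_{\mathcal{F}}(Q))\ne1$, so $Q$ is not $\mathcal{F}$-radical.

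Second, your route to the dichotomy $\Aut_{\mathcal{F}}(Q_i)\in\{C_2,S_3\}$ via ``$\langle z\rangle$ is $\mathcal{F}$-central'' does not work, and this is not merely a technicality you can patch. Knowing that $\Aut_{\mathcal{F}}(\langle z\rangle)\le\Aut(\langle z\rangle)$ is a $2$-group constrains only those $\mathcal{F}$-morphisms that already stabilise $\langle z\rangle$; an odd-order $\alpha\in\Aut_{\mathcal{F}}(Q_i)$ need not do so, and then there is no induced action on $Q_i/\langle z\rangle$. Indeed the paper's Lemma~\ref{fixedpt} (proved afterwards) shows that $\C_{Q_i}(\N_G(Q_i,b_{Q_i}))$ can equal $\langle x^{2^{n-2}}z\rangle$ rather than $\langle z\rangle$, so $\alpha$ may well move $z$. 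The paper instead bounds $|\Aut(Q_i)|_{2'}$ directly: for $m\ne1$ the characteristic chain $\Phi(Q_i)<\Omega(Q_i)\Phi(Q_i)\le Q_i$ forces $|\Aut(Q_i)|=2^k\cdot3$; for $m=1$ one has $\Aut(Q_i)\cong\GL(3,2)$, and an inspection of the Sylow $7$-normaliser rules out $|\pcore_{2'}(\Aut_{\mathcal{F}}(Q_i))|\in\{7,21\}$. Together with the facts that a Sylow $2$-subgroup of $\Aut_{\mathcal{F}}(Q_i)$ has order $2$ and (when $\Aut_{\mathcal{F}}(Q_i)$ is strictly larger) $\pcore_2(\Aut_{\mathcal{F}}(Q_i))=1$, this yields $\Aut_{\mathcal{F}}(Q_i)\cong S_3$.
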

\begin{proof}
Let $Q<D$ be $\mathcal{F}$-centric and $\mathcal{F}$-radical. Then $z\in\Z(D)\subseteq\C_D(Q)\subseteq Q$ and $Q=(Q\cap\langle x,y\rangle)\times\langle z\rangle$. Since $\Aut(Q)$ is not a $2$-group, $Q\cap\langle x,y\rangle$ and thus $Q$ must be abelian (see Lemma~\ref{aut}). Let us consider the case $Q=\langle x,z\rangle$. Then $m=n-1$ (this is not important here).
The group $D\subseteq\N_G(Q,b_Q)$ acts trivially on $\Omega(Q)\subseteq\Z(D)$, while a nontrivial automorphism of $\Aut(Q)$ of odd order acts nontrivially on $\Omega(Q)$ (see Theorem~5.2.4 in \cite{Gorenstein}). This contradicts $\pcore_2(\Aut_{\mathcal{F}}(Q))=1$. Hence, $Q$ is isomorphic to $C_2^2\times C_{2^m}$, and contains an element of the form $x^iy$. After conjugation with a suitable power of $x$ we may assume $Q\in\{Q_1,Q_2\}$. This shows the first claim. 
The second claim follows from Alperin's fusion theorem.

Let $S\le D$ be an arbitrary subgroup isomorphic to $C_2^2\times C_{2^m}$. If $z\notin S$, the group $\langle S,z\rangle=(\langle S,z\rangle\cap\langle x,y\rangle)\times\langle z\rangle$ is abelian and of order at least $2^{m+3}$. Hence, $\langle S,z\rangle\cap\langle x,y\rangle$ would be cyclic. This contradiction shows $z\in S$. Thus, $S$ is conjugate to $Q\in\{Q_1,Q_2\}$. Since $\lvert\N_D(Q)\rvert=2^{m+3}$, we derive that $Q$ is fully $\mathcal{F}$-normalized (see Definition~2.2 in \cite{Linckelmann}). In particular $\N_D(Q)\C_G(Q)/\C_G(Q)\cong\N_D(Q)/Q\cong C_2$ is a Sylow $2$-subgroup of $\Aut_{\mathcal{F}}(Q)=\N_G(Q,b_Q)/\C_G(Q)$ by Proposition~2.5 in \cite{Linckelmann}. In particular $\pcore_{2'}(\Aut_{\mathcal{F}}(Q))$ has index $2$ in $\Aut_{\mathcal{F}}(Q)$. Assume $\N_D(Q)\C_G(Q)<\N_G(Q,b_Q)$. Lemma~5.4 in \cite{Linckelmann} shows $\pcore_2(\Aut_{\mathcal{F}}(Q))=1$. 
If $m\ne 1$, we have $\lvert\Aut(Q)\rvert=2^k\cdot3$ for some $k\in\mathbb{N}$, since $\Phi(Q)<\Omega(Q)\Phi(Q)\le Q$ are characteristic subgroups. Then $\Aut_{\mathcal{F}}(Q)=\N_G(Q,b_Q)/\C_G(Q)\cong S_3$. Hence, we may assume $m=1$. Then $\Aut_{\mathcal{F}}(Q)\le\Aut(Q)\cong\GL(3,2)$. Since the normalizer of a Sylow $7$-subgroup of $\GL(3,2)$ has order $21$, it follows that $\lvert\pcore_{2'}(\Aut_{\mathcal{F}}(Q))\rvert\ne 7$. Since this normalizer is selfnormalizing in $\GL(3,2)$, we also have $\lvert\pcore_{2'}(\Aut_{\mathcal{F}}(Q))\rvert\ne 21$. This shows $\lvert\pcore_{2'}(\Aut_{\mathcal{F}}(Q))\rvert=3$ and $\Aut_{\mathcal{F}}(Q)=\N_G(Q,b_Q)/\C_G(Q)\cong S_3$, because $\lvert\GL(3,2)\rvert=2^3\cdot3\cdot7$.

The last claim follows from Alperin's fusion theorem and $e(B)=1$.
\end{proof}

The naming of these cases is adopted from \cite{Brauer}. 
Since the cases (ab) and (ba) are symmetric, we ignore case (ba) for the rest of the paper. It is easy to see that $Q_1$ and $Q_2$ are not conjugate in $D$. Hence, by Alperin's fusion theorem the subpairs $(Q_1,b_{Q_1})$ and $(Q_2,b_{Q_2})$ are not conjugate in $G$. It is also easy to see that $Q_1$ and $Q_2$ are always $\mathcal{F}$-centric.

\begin{Lemma}\label{fixedpt}
Let $Q\in\{Q_1,Q_2\}$ such that $\N_G(Q,b_Q)/\C_G(Q)\cong S_3$. Then 
\[\C_Q(\N_G(Q,b_Q))\in\{\langle z\rangle,\ \langle x^{2^{n-2}}z\rangle\}.\]
In particular $z^{2j}\in\C_Q(\N_G(Q,b_Q))$ and $x^{2^{n-2}}z^{2j}\notin\C_Q(\N_G(Q,b_Q))$ for $j\in\mathbb{Z}$.
\end{Lemma}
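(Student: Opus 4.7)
The plan is to analyze how $\Aut_{\mathcal{F}}(Q) = \N_G(Q,b_Q)/\C_G(Q) \cong S_3$ acts on the abelian $2$-group $Q \cong C_2^2\times C_{2^m}$. To this end, write $a := x^{2^{n-2}}$, $c := z$, and $b := y$ if $Q = Q_1$ or $b := xy$ if $Q = Q_2$, so that $Q = \langle a\rangle\times\langle b\rangle\times\langle c\rangle$. Choose a generator $\sigma$ of the order-$3$ subgroup of $\Aut_{\mathcal{F}}(Q)$ and an involution $\tau$, so that $\tau\sigma\tau = \sigma^{-1}$. By Lemma~\ref{essential}, $\tau$ may be represented by $x^{2^{n-3}}\in\N_D(Q)$, and a direct computation gives $\tau(a)=a$, $\tau(b)=ab$, $\tau(c)=c$.

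The central step is to show that $\C_Q(\sigma)$ is cyclic of order $2^m$. Since $\gcd(3,|Q|)=1$, we have the $\sigma$-stable decomposition $Q = \C_Q(\sigma)\oplus[\sigma,Q]$. On the one hand, $\Aut(Q/\langle a,b\rangle)=\Aut(\langle c\rangle)$ is a $2$-group, so $\sigma$ induces the identity on $Q/\langle a,b\rangle$; this forces $[\sigma,Q]\subseteq\langle a,b\rangle$ and hence $|\C_Q(\sigma)|\ge|Q|/4=2^m$. On the other hand, for $m\ge 1$, Theorem~5.2.4 of \cite{Gorenstein} ensures that $\sigma$ acts nontrivially on $\Omega_1(Q)\cong C_2^3$; the $\mathbb{F}_2\langle\sigma\rangle$-module $\Omega_1(Q)$ therefore decomposes as a trivial summand of dimension $1$ and a $2$-dimensional faithful part, so $\C_Q(\sigma)$ contains a unique involution and is cyclic, of order at most the exponent of $Q$, which is $2^m$. (The case $m=0$ is trivial since $S_3=\GL(2,2)$ acts faithfully on $Q=\langle a,b\rangle$.) Combining, $|\C_Q(\sigma)|=2^m$ and $[\sigma,Q]=\langle a,b\rangle$ for every $m\ge 1$.

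It follows that $\C_Q(\sigma)$ is a cyclic complement of $\langle a,b\rangle$ in $Q$; the four such complements are $\langle c\rangle$, $\langle ac\rangle$, $\langle bc\rangle$ and $\langle abc\rangle$. Because $\tau$ normalizes $\langle\sigma\rangle$, it preserves $\C_Q(\sigma)$ setwise; but $\tau(bc)=abc$ and $\tau(abc)=bc$, ruling out the last two candidates. Hence $\C_Q(\sigma)\in\{\langle c\rangle,\langle ac\rangle\}=\{\langle z\rangle,\langle x^{2^{n-2}}z\rangle\}$, and since $\tau$ fixes both $c$ and $ac$, it acts trivially on $\C_Q(\sigma)$, giving $\C_Q(\N_G(Q,b_Q))=\C_Q(\sigma)$, as claimed. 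The ``In particular'' statements then follow from $(x^{2^{n-2}}z)^{2j}=z^{2j}$ and the observation that the $r$-th power of $x^{2^{n-2}}z$ equals $x^{(r\bmod 2)\cdot 2^{n-2}}z^r$, which can coincide with $x^{2^{n-2}}z^{2j}$ only if $r$ is simultaneously odd and $\equiv 2j \pmod{2^m}$---impossible for $m\ge 1$.

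The hard part will be establishing both the cyclicity and the precise order $2^m$ of $\C_Q(\sigma)$; once these are secured, everything else reduces to a short calculation with the explicit action of $\tau$ on the four complements.
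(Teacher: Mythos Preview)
Your argument is correct and follows essentially the same strategy as the paper: both determine $\C_Q(\alpha)$ for the order-$3$ automorphism $\alpha$ and then exploit the relation $\beta\alpha\beta^{-1}=\alpha^{-1}$ (with $\beta$ the involution induced by $x^{2^{n-3}}\in\N_D(Q)$) to pin down which cyclic subgroup it is. The differences are organisational. The paper quotes an external result (Lemma~3.2 in \cite{Sambalemna}) for $|\C_Q(\alpha)|=2^m$, whereas you derive both the order and the cyclicity directly from the coprime decomposition $Q=\C_Q(\sigma)\oplus[\sigma,Q]$ and the $\mathbb{F}_2\langle\sigma\rangle$-module structure of $\Omega_1(Q)$. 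The paper then first proves $\C_Q(\alpha)\subseteq\Z(D)$ (via a short contradiction using $\beta$) and afterwards needs a separate argument for $m=1$ to exclude the possibility $\C_Q(\alpha)=\langle x^{2^{n-2}}\rangle$; your route---listing the four cyclic complements $\langle c\rangle,\langle ac\rangle,\langle bc\rangle,\langle abc\rangle$ of $[\sigma,Q]=\langle a,b\rangle$ and discarding the last two by $\tau$-invariance---handles all $m\ge 1$ uniformly. Your version is therefore somewhat more self-contained and avoids the case split, at the cost of spelling out the complement enumeration.
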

\begin{proof}
We consider only the case $Q=Q_1$ (the other case is similar). It is easy to see that the elements in $Q\setminus\Z(D)$ are not fixed under $\N_D(Q)\subseteq\N_D(Q,b_Q)$. 
Since $D$ acts trivially on $\Z(D)$, it suffices to determine the fixed points of an automorphism $\alpha\in\Aut_{\mathcal{F}}(Q)$ of order $3$ in $\Z(D)$. By Lemma~3.2 in \cite{Sambalemna} $\C_Q(\alpha)=\langle a\rangle$ has order $2^m$. First we show that $a\in\Z(D)$. Suppose the contrary.
Let $\beta\in\Aut_{\mathcal{F}}(Q)$ be the automorphism induced by $x^{2^{n-3}}\in\N_D(Q)\subseteq\N_G(Q,b_Q)$. Then we have $\beta(a)\ne a$.
Since $\beta\alpha\beta^{-1}=\alpha^{-1}$, we have $\alpha(\beta(a))=\beta(\alpha^{-1}(a))=\beta(a)$. Thus, $\beta(a)\in\C_Q(\alpha)=\langle a\rangle$. 
This gives the contradiction $\beta(a)a^{-1}\in D'\cap\langle a\rangle=\langle x^2\rangle\cap\langle a\rangle=1$.
Now in case $m\ne 1$ the claim is clear.
Thus, assume $m=1$ and $a=x^{2^{n-2}}$. Then $\beta$ acts trivially on $Q/\langle a\rangle$ and $\alpha$ acts nontrivially on $Q/\langle a\rangle$. This contradicts $\beta\alpha\beta^{-1}\alpha=1$.
\end{proof}

It is not possible to decide whether $\C_Q(\N_G(Q,b_Q))$ is $\langle z\rangle$ or $\langle x^{2^{n-2}}z\rangle$ in Lemma~\ref{fixedpt}, since we can replace $z$ by $x^{2^{n-2}}z$.
For a subgroup $Q\le D$ and an element $u\in\Z(Q)$ we write $b_u:=b_{\langle u\rangle}=b_Q^{\C_G(u)}$, where $b_Q^{\C_G(u)}$ denotes the Brauer correspondent of $b_Q$ in $R\C_G(u)$.

\begin{Lemma}\hfill\label{subrep}
\begin{enumerate}[(i)]
\item In case (aa) the subsections $(x^iz^j,b_{x^iz^j})$ ($i=0,1,\ldots,2^{n-2}$, $j=0,1,\ldots,2^m-1$) form a set of representatives for the conjugacy classes of $B$-subsections.\label{aarep}
\item In case (ab) the subsections $(x^iz^j,b_{x^iz^j})$ and $(yz^j,b_{yz^j})$ ($i=0,1,\ldots,2^{n-2}$, $j=0,1,\ldots,2^m-1$) form a set of representatives for the conjugacy classes of $B$-subsections.\label{abrep}
\end{enumerate}
\end{Lemma}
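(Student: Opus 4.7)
The plan is to use the standard bijection between $G$-conjugacy classes of $B$-subsections and $\mathcal{F}$-orbits on the elements of $D$. Combined with Alperin's fusion theorem and Lemma~\ref{essential}, this reduces the problem to enumerating the $D$-conjugacy classes in $D$ and then identifying which of these are merged under the extra $S_3$-fusion on $Q_1$ and/or $Q_2$.

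First I would list the $D$-classes. Since $z$ is central and $yxy^{-1}=x^{-1}$, conjugation by $y$ sends $x^iz^j$ to $x^{-i}z^j$ and conjugation by $x$ sends $x^iyz^j$ to $x^{i+2}yz^j$. A full set of $D$-representatives is therefore $x^iz^j$ for $0\le i\le 2^{n-2}$, together with $yz^j$ and $xyz^j$, for $0\le j<2^m$. Next, whenever $\Aut_{\mathcal{F}}(Q_i)\cong S_3$, an element $\alpha$ of order $3$ fixes $\C_{Q_i}(\N_G(Q_i,b_{Q_i}))$ pointwise and cyclically permutes the three nontrivial cosets of this subgroup in $Q_i$. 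A direct computation using Lemma~\ref{fixedpt} (together with the relation $\alpha^3=1$) shows that the $\alpha$-orbit of $x^{2^{n-2}}z^j$ in $Q_1$ equals $\{x^{2^{n-2}}z^j,\,yz^{j'},\,x^{2^{n-2}}yz^{j'}\}$ for some $j'$ depending bijectively on $j$. Since $x^{2^{n-2}}yz^{j'}\sim_D yz^{j'}$, the $Q_1$-fusion merges precisely the two $D$-classes $\{x^{2^{n-2}}z^j\}$ and $\{yz^{j'}\}$; analogously, the $Q_2$-fusion pairs each $\{x^{2^{n-2}}z^j\}$ with some $\{xyz^{j''}\}$.

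Putting this together, in case (aa) both pairings apply, so for each $j$ the three $D$-classes $\{x^{2^{n-2}}z^j\}$, $\{yz^{j'}\}$, $\{xyz^{j''}\}$ collapse into a single $\mathcal{F}$-orbit; in case (ab) only the $Q_2$-pairing applies, so $\{yz^j\}$ remains a separate orbit. For $0<i<2^{n-2}$ the elements $x^iz^j$ lie in no $D$-conjugate of $Q_1$ or $Q_2$ (such a conjugate only contains elements with $x$-exponent in $\{0,2^{n-2}\}$ or involving $y$), and since Alperin's fusion theorem supplies no morphisms outside $\Aut_{\mathcal{F}}(Q_1)\cup\Aut_{\mathcal{F}}(Q_2)$ by Lemma~\ref{essential}, their $D$-orbits coincide with their $\mathcal{F}$-orbits. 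The stated lists thus form complete sets of $\mathcal{F}$-orbit representatives in each case. The main technical point is the explicit $\alpha$-orbit computation, which leans on $\alpha$ fixing the powers $z^{2k}$ (Lemma~\ref{fixedpt}) to force the two non-fixed elements of the orbit to share a $z$-exponent, and hence a $D$-class.
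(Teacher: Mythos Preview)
Your overall strategy—enumerate the $D$-classes and then track the extra identifications coming from $\Aut_{\mathcal F}(Q_1)$ and $\Aut_{\mathcal F}(Q_2)$—is a legitimate alternative to the paper's route through Olsson's $\A_0(D,b_D)$ machinery, but the orbit computation you sketch contains a genuine error. You assert that the $\alpha$-orbit of $x^{2^{n-2}}z^j$ is always $\{x^{2^{n-2}}z^j,\,yz^{j'},\,x^{2^{n-2}}yz^{j'}\}$. Lemma~\ref{fixedpt}, however, allows $\C_{Q_1}(\alpha)=\langle x^{2^{n-2}}z\rangle$, and then $x^{2^{n-2}}z^j$ is \emph{fixed} by $\alpha$ for every odd $j$, so its orbit has size $1$, not $3$. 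You may normalise so that $\C_{Q_1}(\alpha)=\langle z\rangle$, but $\C_{Q_2}(\alpha')$ can still equal $\langle x^{2^{n-2}}z\rangle$ (the example $S_6$ mentioned after Theorem~\ref{main} realises exactly this), so the same problem recurs for $Q_2$. Thus your description of which three $D$-classes collapse is wrong in general: sometimes it is $\{z^j\}$ (for $j$ odd) that absorbs the $y$- and $xy$-classes while $\{x^{2^{n-2}}z^j\}$ remains alone.

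The repair is to argue from the non-central side, which is essentially what the paper does inside its proof. For $u=yz^j\in Q_1\setminus\Z(D)$ one has $u\cdot\alpha(u)\cdot\alpha^{-1}(u)\in\C_{Q_1}(\alpha)\subseteq\Z(D)$; since $\Z(D)$ has index $2$ in $Q_1$, exactly one of $\alpha(u),\alpha^{-1}(u)$ lies in $\Z(D)$, so every $yz^j$ is $\mathcal F$-conjugate to some central element (and likewise every $xyz^j$ via $Q_2$). One must then check—your sketch omits this—that no two distinct central elements become $\mathcal F$-conjugate; this holds because each $S_3$-orbit in $Q_i$ meets $\Z(D)$ in a single point and no $D$-conjugate of a central element lies in $Q_i\setminus\Z(D)$. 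Finally, your ``technical point'' is also off: the reason the two non-central orbit members share a $z$-exponent is not that $\alpha$ fixes $z^{2k}$, but that the relation $\beta\alpha\beta^{-1}=\alpha^{-1}$ forces $\beta$ to interchange $\alpha(u)$ and $\alpha^{-1}(u)$, and $\beta$ (conjugation by $x^{2^{n-3}}$) preserves $z$-exponents.
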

\begin{proof}
We investigate the set $\A_0(D,b_D)$ (see \cite{Olssonsubpairs}) and apply (6C) in \cite{Brauerstruc}. Since $D\in\A_0(D,b_D)$ and $e(B)=1$ there are $2^{m+1}$ major subsections $(z^j,b_{z^j})$ and  $(x^{2^{n-2}}z^j,b_{x^{2^{n-2}}z^j})$ ($j=0,1,\ldots,2^m-1$) which are pairwise nonconjugate. Now let $Q\in\A_0(D,b_D)$. As in the proof of Lemma~\ref{essential}, we have $Q=(Q\cap\langle x,y\rangle)\times\langle z\rangle$ (see Lemma~(3.1) in \cite{Olssonsubpairs}). If $Q\cap\langle x,y\rangle$ is a nonabelian dihedral group, then $\Z(Q)=\Z(D)$, and there are no subsections corresponding to $(Q,b_Q)$. On the other hand we have $Q:=\langle x,z\rangle\in\A_0(D,b_D)$ by Lemma~1.7 in \cite{Olsson}. 
Suppose that $\Aut_{\mathcal{F}}(Q)$ is not a $2$-group.
Then $m=n-1$ and $D\C_G(Q)/\C_G(Q)$ is a Sylow $2$-subgroup of $\Aut_{\mathcal{F}}(Q)$. Since $\Aut(D)$ is a $2$-group, Lemma~5.4 in \cite{Linckelmann} shows $\pcore_2(\Aut_{\mathcal{F}}(Q))=1$. 
However, this contradicts Lemma~\ref{essential}, since $Q$ is $\mathcal{F}$-centric.
This shows $\N_G(Q,b_Q)=D\C_G(Q)$. For a subsection $(u,b)$ with $u\in Q$ we must check whether $\lvert\N_G(Q,b_Q)\cap\C_G(u):Q\C_G(Q)\rvert$ is odd. It is easy to see that this holds if and only if $u\notin\Z(D)$. The action of $D$ on $Q\setminus\Z(D)$ gives the following subsections:
$(x^iz^j,b_{x^iz^j})$ ($i=1,\ldots,2^{n-2}-1$, $j=0,1,\ldots,2^m-1$).

Now suppose $Q=Q_2$ and $u\in Q\setminus\Z(D)$. Let $\alpha\in\Aut_{\mathcal{F}}(Q)$ be an automorphism of order $3$. As in the proof of Lemma~\ref{fixedpt} we have $\C_Q(\alpha)\subseteq\Z(D)$. Thus, $u\alpha(u)\alpha^{-1}(u)\in\C_Q(\alpha)\subseteq\Z(D)$. It follows that $\alpha(u)\in\Z(D)$ or $\alpha^{-1}(u)\in\Z(D)$, since $\Z(D)$ has index $2$ in $Q$. Let $\beta\in\Aut_{\mathcal{F}}(Q)$ be the automorphism induced by $x^{2^{n-3}}\in\N_D(Q)\subseteq\N_G(Q,b_Q)$. Then one of the $2$-elements $\alpha\beta\alpha^{-1}$ or $\alpha^{-1}\beta\alpha$ fixes $u$. This shows $2\mathrel{\big|}\lvert\N_G(Q,b_Q)\cap\C_G(u):\C_G(Q)\rvert$ for every $u\in Q$. Hence, there are no subsections corresponding to $(Q_2,b_{Q_2})$. In case (aa) the same holds for $(Q_1,b_{Q_1})$. This proves part~\eqref{aarep}. Let us consider $Q=Q_1$ in case (ab). By way of contradiction, suppose $Q\notin\A_0(D,b_D)$. Then we get the same set of representatives for the conjugacy classes of subsections as in case (aa). In particular the subpair $(\langle y\rangle,b_y)$ is conjugate to a subpair $(\langle u\rangle,b_u)$ with $u\in\Z(D)$. However, this contradicts Alperin's fusion theorem. Hence, $Q\in\A_0(D,b_D)$.
Then we have $\lvert\N_G(Q,b_Q)\cap\C_G(u):Q\C_G(Q)\rvert=\lvert\N_D(Q)\C_G(Q)\cap\C_G(u):\C_G(Q)\rvert=\lvert\C_G(Q)(\N_D(Q)\cap\C_G(u)):\C_G(Q)\rvert=\lvert\N_D(Q)\cap\C_G(u):Q\rvert$ for $u\in Q$. Thus, we have to take the subsections $(u,b)$ with $u\in Q\setminus\Z(D)$ up to $\N_D(Q)$-conjugation. This shows part~\eqref{abrep}. 
\end{proof}

\section{The numbers \texorpdfstring{$k(B)$}{k(B)}, \texorpdfstring{$k_i(B)$}{ki(B)} and \texorpdfstring{$l(B)$}{l(B)}}

Now we study the generalized decomposition numbers of $B$. If $l(b_u)=1$, then we denote the unique irreducible modular character of $b_u$ by $\phi_u$. In this case the generalized decomposition numbers $d^u_{\chi\phi_u}$ for $\chi\in\Irr(B)$ form a column $d(u)$. Let $2^k$ be the order of $u$, and let $\zeta:=\zeta_{2^k}$ be a primitive $2^k$-th root of unity. Then the entries of $d(u)$ lie in the ring of integers $\mathbb{Z}[\zeta]$. Hence, there exist integers $a_i^u:=(a_i^{u}(\chi))_{\chi\in\Irr(B)}\in\mathbb{Z}^{k(B)}$ such that
\[d_{\chi\phi_u}^u=\sum_{i=0}^{2^{k-1}-1}{a_i^{u}(\chi)\zeta^i}.\]
We extend this by
\[a_{i+2^{k-1}}^u:=-a_i^u\]
for all $i\in\mathbb{Z}$.

Let $|G|=2^ar$ where $2\nmid r$. We may assume $\mathbb{Q}(\zeta_{|G|})\subseteq K$. Then $\mathbb{Q}(\zeta_{|G|})\mid\mathbb{Q}(\zeta_r)$ is a Galois extension, and we denote the corresponding Galois group by
\[\mathcal{G}:=\Gal\bigl(\mathbb{Q}(\zeta_{|G|})\mid\mathbb{Q}(\zeta_r)\bigr).\] 
Restriction gives an isomorphism
\[\mathcal{G}\cong\Gal\bigl(\mathbb{Q}(\zeta_{2^a})\mid\mathbb{Q}\bigr).\]
In particular $|\mathcal{G}|=2^{a-1}$.
For every $\gamma\in\mathcal{G}$ there is a number $\widetilde{\gamma}\in\mathbb{N}$ such that $\gcd(\widetilde{\gamma},|G|)=1$, $\widetilde{\gamma}\equiv 1\pmod{r}$, and $\gamma(\zeta_{|G|})=\zeta_{|G|}^{\widetilde{\gamma}}$ hold. Then $\mathcal{G}$ acts on the set of subsections by
\[^\gamma(u,b):=(u^{\widetilde{\gamma}},b).\]
For every $\gamma\in\mathcal{G}$ we get
\begin{equation}\label{dgamma}
d(u^{\widetilde{\gamma}})=\sum_{s\in\mathcal{S}}{a_s^{u}\zeta_{2^k}^{s\widetilde{\gamma}}}
\end{equation}
for every system $\mathcal{S}$ of representatives of the cosets of $2^{k-1}\mathbb{Z}$ in $\mathbb{Z}$.
It follows that
\begin{equation}\label{aiuspur}
a_s^u=2^{1-a}\sum_{\gamma\in\mathcal{G}}{d\bigl(u^{\widetilde{\gamma}}\bigr)\zeta_{2^k}^{-\widetilde{\gamma}s}}
\end{equation}
for $s\in\mathcal{S}$. 

Next, we introduce a general result which does not depend on $D$.

\begin{Lemma}\label{heightzeroodd}
Let $(u,b_u)$ be a $B$-subsection with $|\langle u\rangle|=2^k$ and $l(b_u)=1$.
\begin{enumerate}[(i)]
\item If $\chi\in\Irr(B)$ has height $0$, then the sum 
\begin{equation}\label{sum}
\sum_{i=0}^{2^{k-1}-1}{a_i^u(\chi)}
\end{equation}
is odd.\label{ht1}
\item If $(u,b_u)$ is major and $k\le 1$, then $2^{h(\chi)}\mid d^u_{\chi\phi_u}=a_0^u(\chi)$ and $2^{h(\chi)+1}\nmid d^u_{\chi\phi_u}$ for all $\chi\in\Irr(B)$.\label{ht2}
\end{enumerate}
\end{Lemma}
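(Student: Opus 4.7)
The plan is to interpret $\sum_{i=0}^{2^{k-1}-1}a_i^u(\chi)$ as the image of $d^u_{\chi\phi_u}$ under the natural quotient $\mathbb{Z}[\zeta_{2^k}]\twoheadrightarrow\mathbb{Z}[\zeta_{2^k}]/(1-\zeta_{2^k})\cong\mathbb{F}_2$, in which $\zeta_{2^k}\equiv 1$. Under this identification, part~\eqref{ht1} amounts to showing $d^u_{\chi\phi_u}\notin(1-\zeta_{2^k})\mathbb{Z}[\zeta_{2^k}]$ whenever $h(\chi)=0$. The key input is Brauer's second main theorem: since $l(b_u)=1$, for each $2$-regular element $y\in\C_G(u)$ one has $\chi(uy)=d^u_{\chi\phi_u}\phi_u(y)$, and in particular $\chi(u)=d^u_{\chi\phi_u}\phi_u(1)$. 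As $u$ is a $2$-element, each eigenvalue of the representation affording $\chi$ at $u$ is a $2^k$-th root of unity and therefore lies in $1+(1-\zeta_{2^k})\mathbb{Z}[\zeta_{2^k}]$, giving $\chi(u)\equiv\chi(1)\pmod{1-\zeta_{2^k}}$. Moreover $b_u$ is a nilpotent block (since $l(b_u)=1$), so by the Broué--Puig theorem one has $\phi_u(1)=|\C_G(u):D_u|_2\cdot e$ with $e$ odd, where $D_u$ denotes a defect group of $b_u$. Comparing the $\pi$-adic valuations ($\pi:=1-\zeta_{2^k}$) on both sides of the Brauer identity and using the height-$0$ condition $v_2(\chi(1))=v_2(|G:D|_2)$ then yields $v_\pi(d^u_{\chi\phi_u})=0$, which is what is required.

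For part~\eqref{ht2} the hypothesis $k\le 1$ collapses the statement to an equality of ordinary $2$-adic valuations. When $k=0$, we have $u=1$, $b_u=B$ and $l(B)=1$, so $B$ itself is nilpotent; I would invoke the classification of irreducible characters of a nilpotent block, under which $\Irr(B)\leftrightarrow\Irr(D)$ and the ordinary decomposition number $d_{\chi\phi_1}$ equals the degree $\lambda(1)=2^{h(\chi)}$ of the associated $\lambda\in\Irr(D)$. When $k=1$ the element $u$ has order $2$ and $\zeta_2=-1$, so $d^u_{\chi\phi_u}=a_0^u(\chi)\in\mathbb{Z}$. The major hypothesis forces $b_u$ to be a nilpotent block with defect group $D$, hence $\phi_u(1)=|\C_G(u):D|_2$ exactly. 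Combining $\chi(u)=d^u_{\chi\phi_u}\phi_u(1)$ with $\chi(u)\equiv\chi(1)\pmod 2$ (the eigenvalues of the representation at $u$ are $\pm 1$) and the height identity $v_2(\chi(1))=v_2(|G:D|_2)+h(\chi)$, I would read off the exact value $v_2(d^u_{\chi\phi_u})=h(\chi)$.

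The main obstacle throughout is sharpness: passing from $2^{h(\chi)}\mid d^u_{\chi\phi_u}$ (respectively $v_\pi(d^u_{\chi\phi_u})\ge 0$) to the opposite inequality. Integrality of $d^u_{\chi\phi_u}$ supplies only one half of each valuation bound; the matching upper bound must come from the explicit formula $\phi_u(1)=|\C_G(u):D_u|_2\cdot e$ furnished by the structure of nilpotent blocks, combined with the precise $2$-adic behaviour of $\chi(u)$ forced by the height hypothesis on $\chi$. I expect the bookkeeping to split into subcases depending on whether $|G:D|_2$ and $|\C_G(u):D_u|_2$ are $1$ or proper powers of $2$.
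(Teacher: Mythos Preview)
Your approach has two genuine gaps that prevent it from closing.

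First, the implication ``$l(b_u)=1\Rightarrow b_u$ is nilpotent'' is not a theorem; it is an open problem. You invoke it (via Broué--Puig) to pin down $v_2(\phi_u(1))$, but it cannot be used as a fact in a general lemma that, as the paper stresses, ``does not depend on $D$''.

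Second, and more seriously, the congruence $\chi(u)\equiv\chi(1)\pmod{1-\zeta_{2^k}}$ gives no information about $v_\pi(\chi(u))$ once $\chi(1)$ itself lies in $(1-\zeta_{2^k})$, i.e.\ once $|G:D|_2>1$. In that regime both sides of the congruence vanish modulo $\pi$, and you learn only $v_\pi(\chi(u))\ge 1$; from $\chi(u)=d^u_{\chi\phi_u}\,\phi_u(1)$ you therefore cannot conclude $v_\pi(d^u_{\chi\phi_u})=0$. The same obstruction kills the $k=1$ case of part~(ii): $\chi(u)\equiv\chi(1)\pmod 2$ records only the parity of $\chi(u)$, not its exact $2$-adic valuation, so the desired equality $v_2(d^u_{\chi\phi_u})=h(\chi)$ does not follow. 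Your own ``main obstacle'' diagnosis is accurate, but the proposed remedy (nilpotency plus the eigenvalue congruence) does not supply the missing upper bound. Only the degenerate case $k=0$ survives, since then $\chi(u)=\chi(1)$ exactly.

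The paper avoids both issues by working with Brauer's contributions $m^{(u,b_u)}_{\chi\psi}$ rather than with character values. Because $l(b_u)=1$ one has $|D_u|\,m^{(u,b_u)}_{\chi\chi}=d^u_{\chi\phi_u}\,\overline{d^u_{\chi\phi_u}}$, and Broué's height-zero criterion gives $|D_u|\,m^{(u,b_u)}_{\chi\chi}\not\equiv 0\pmod\pi$ directly, whence $d^u_{\chi\phi_u}\not\equiv 0\pmod\pi$; part~(i) then follows from $\zeta_{2^k}\equiv 1\pmod\pi$. For part~(ii) one fixes a height-zero $\psi\in\Irr(B)$ (so $d^u_{\psi\phi_u}$ is odd by part~(i)) and applies Brauer's (5H), which gives the \emph{exact} $2$-divisibility of $|D|\,m^{(u,b_u)}_{\chi\psi}=d^u_{\chi\phi_u}\,d^u_{\psi\phi_u}$. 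Neither nilpotency nor any estimate on $\phi_u(1)$ or $\chi(u)$ is needed.
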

\begin{proof}
Let $Q\le D$ be a defect group of $b_u$. Since $l(b_u)=1$, we have $|Q|m_{\chi\chi}^{(u,b_u)}=d^u_{\chi\phi_u}\overline{d^u_{\chi\phi_u}}$ for the contribution $m_{\chi\chi}^{(u,b_u)}$ (see Eq.~(5.2) in \cite{BrauerBlSec2}). Assume that $\chi$ has height $0$.
By Corollary~2 in \cite{BroueSanta} it follows that \[|Q|m_{\chi\chi}^{(u,b_u)}=|Q|\bigl(\chi^{(u,b_u)},\chi\bigr)_G\not\equiv 0\pmod{(\pi)}\] 
and $d^u_{\chi\phi_u}\not\equiv0\pmod{(\pi)}$. Since $\zeta_{2^k}\equiv 1\pmod{(\pi)}$, the sum~\eqref{sum} is odd.

Now assume that $(u,b_u)$ is major and $k\le 1$. Then $d^u_{\chi\phi_u}=a_0^u(\chi)\in\mathbb{Z}$ for all $\chi\in\Irr(B)$. If $\psi\in\Irr(B)$ has height $0$ ($\psi$ always exists), part~\eqref{ht1} shows that $d^u_{\psi\phi_u}$ is odd. By (5H) in \cite{BrauerBlSec2} we have $2^{h(\chi)}\mid|D|m_{\chi\psi}^{(u,b_u)}=d^u_{\chi\phi_u}d^u_{\psi\phi_u}$ and $2^{h(\chi)+1}\nmid|D|m_{\chi\psi}^{(u,b_u)}$. This proves part~\eqref{ht2}.
\end{proof}

\begin{Lemma}\label{olsson}
Olsson's conjecture $k_0(B)\le 2^{m+2}=|D:D'|$ is satisfied in all cases.
\end{Lemma}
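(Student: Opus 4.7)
The plan is to split into the four cases of Lemma~\ref{essential}. In case (bb), $B$ is nilpotent, so by Broué--Puig's theorem $B$ is Morita equivalent to $FD$ and $k_0(B) = k_0(D) = |D:D'| = 2^{m+2}$, with equality.

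For case (ab), I would use the non-major subsection $(y,b_y)$ from Lemma~\ref{subrep}(\ref{abrep}). Its defect group equals $\C_D(y) = Q_1 \cong C_2^2 \times C_{2^m}$, of order $2^{m+2}$. In case (ab), $\Aut_{\mathcal{F}}(Q_1) = \N_D(Q_1)/Q_1 \cong C_2$ is generated by the automorphism induced by $x^{2^{n-3}}$, which sends $y$ to $x^{2^{n-2}}y \neq y$; hence the stabilizer of $y$ in $\Aut_{\mathcal{F}}(Q_1)$ is trivial, so $b_y$ is nilpotent with $l(b_y) = 1$ and Cartan entry $|Q_1| = 2^{m+2}$. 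By Lemma~\ref{heightzeroodd}(\ref{ht1}), $d^y_{\chi\phi_y} \in \mathbb{Z}$ is odd for every $\chi \in \Irr_0(B)$, and column orthogonality of the generalized decomposition matrix yields
\[ k_0(B) \leq \sum_{\chi\in\Irr_0(B)}\bigl(d^y_{\chi\phi_y}\bigr)^2 \leq \sum_{\chi\in\Irr(B)}\bigl(d^y_{\chi\phi_y}\bigr)^2 = |Q_1| = 2^{m+2}. \]

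Case (aa) will be the main obstacle, since by Lemma~\ref{subrep}(\ref{aarep}) the non-major subsection of the form $(y,b_y)$ is absent. Here the plan is to use the major subsection $(u,b_u)$ with $u = x^{2^{n-2}}$. By Lemma~\ref{fixedpt} (applied with $j=0$), $u \notin \C_Q(\N_G(Q,b_Q))$ for $Q \in \{Q_1,Q_2\}$, so no order-$3$ element of $\Aut_{\mathcal{F}}(Q_1)$ or $\Aut_{\mathcal{F}}(Q_2)$ fixes $u$; the essential subgroups of the centralizer fusion system $\C_{\mathcal{F}}(u)$ therefore vanish by Lemma~\ref{essential}, making $b_u$ nilpotent with Cartan entry $|D| = 2^{n+m}$. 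Lemma~\ref{heightzeroodd}(\ref{ht2}) then gives $(d^u_{\chi\phi_u})^2 = 2^{2h(\chi)}e(\chi)^2$ with odd $e(\chi) \in \mathbb{Z}$, whence
\[ k_0(B) + 4k_1(B) + 16k_2(B) + \cdots \leq \sum_\chi\bigl(d^u_{\chi\phi_u}\bigr)^2 = |D| = 2^{n+m}. \]
Alone this bounds $k_0(B)$ only by $2^{n+m}$, which is far from the target $2^{m+2}$. To close the gap I expect to need sharp lower bounds on $k_i(B)$ for $i \geq 1$, supplied by the nilpotent non-major subsections $(x^iz^j,b_{x^iz^j})$ for $1 \leq i \leq 2^{n-2}-1$ (whose defect group $\langle x,z\rangle$ is abelian and admits only inner $\mathcal{F}$-fusion, as noted in the proof of Lemma~\ref{subrep}), combined with the sharp form of Brauer's inequality $k(B) \leq \sum_u l(b_u)$. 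Demonstrating this sharpness --- the key accessibility feature advertised in the introduction --- is the step I expect to be delicate.
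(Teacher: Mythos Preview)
Your treatment of cases (bb) and (ab) is fine; indeed, the paper remarks after its proof that case (ab) follows easily from Robinson's theorem, which is essentially your argument with the subsection $(y,b_y)$. The real issue is case (aa), where your plan has a genuine gap.

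In case (aa) you propose to bound $k_0(B)$ via the major subsection $u=x^{2^{n-2}}$ and the inequality $k_0(B)+4k_1(B)+\cdots\le|D|$, closing the gap with lower bounds on $k_i(B)$ for $i\ge1$. But such lower bounds are not available at this point: the values $k_i(B)$ are determined only in Theorem~\ref{main}, and that proof explicitly uses the present lemma as an input to obtain inequality~\eqref{k*ineq}. Your argument would therefore be circular. Nor do the non-major subsections $(x^iz^j,b_{x^iz^j})$ directly yield height information of the required precision; Brauer's formula $k(B)=\sum_u l(b_u)$ counts characters but says nothing about their heights.

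The paper avoids the case split entirely by using the non-major subsection $(x,b_x)$, with $x$ of order $2^{n-1}$, which is present in every case. Writing $d(x)=\sum_i a_i^x\zeta^i$ as in Eq.~\eqref{dgamma}, the conjugacy of $(x,b_x)$ and $(x^{-1},b_x)$ via $y$ combines with the Galois action to give $a_j^x=a_{-j}^x=-a_{2^{n-2}-j}^x$, in particular $a_{2^{n-3}}^x=0$. One then computes directly, using Eq.~\eqref{aiuspur} and the fact that the subsections $(x^i,b_x)$ for distinct odd $i\in\{1,3,\ldots,2^{n-2}-1\}$ are pairwise non-conjugate, that $(a_0^x,a_0^x)=2^{m+2}$. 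The relations among the $a_j^x$ force $a_0^x(\chi)$ to have the same parity as $\sum_j a_j^x(\chi)$, which is odd for height-zero $\chi$ by Lemma~\ref{heightzeroodd}\eqref{ht1}; hence $k_0(B)\le(a_0^x,a_0^x)=2^{m+2}$. This is the idea you are missing: extract the \emph{integer} vector $a_0^x$ from a subsection attached to an element of large $2$-power order and compute its norm, rather than restricting to subsections of order at most~$2$.
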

\begin{proof}
Let $\gamma\in\mathcal{G}$ such that the restriction of $\gamma$ to $\mathbb{Q}(\zeta_{2^a})$ is the complex conjugation. Then $x^{\widetilde{\gamma}}=x^{-1}$. The block $b_x$ has defect group $\langle x,z\rangle$ (see the proof of (6F) in \cite{Brauerstruc}). Since we have shown that $\Aut_{\mathcal{F}}(\langle x,z\rangle)$ is a $2$-group, $b_x$ is nilpotent. In particular $l(b_x)=1$.
Since the subsections $(x,b_x)$ and $(x^{-1},b_{x^{-1}})=(x^{-1},b_x)={^{\gamma}(x,b_x)}$ are conjugate by $y$, we have $d(x)=d(x^{\widetilde{\gamma}})$ and 
\begin{equation}\label{aiequal}
a_j^x(\chi)=a_{-j}^x(\chi)=-a_{2^{n-2}-j}^x(\chi)                                                                    \end{equation} 
for all $\chi\in\Irr(B)$ by Eq.~\eqref{dgamma}. In particular $a_{2^{n-3}}^x(\chi)=0$ (cf. (4.16) in \cite{Brauer}). By the orthogonality relations we have $(d(x),d(x))=|\langle x,z\rangle|=2^{n-1+m}$. On the other hand the subsections $(x,b_x)$ and $(x^i,b_{x^i})=(x^i,b_x)$ are not conjugate for odd $i\in\{3,5,\ldots,2^{n-2}-1\}$. Eq.~\eqref{aiuspur} implies 
\[(a_0^x,a_0^x)=2^{2(1-a)}\sum_{\gamma,\delta\in\mathcal{G}}{\bigl(d(x^{\widetilde{\gamma}}),d(x^{\widetilde{\delta}})\bigr)}=2^{2(1-a)}2^{2a-n+1}(d(x),d(x))=2^{m+2}\]
(cf. Proposition~(4C) in \cite{Brauer}). Combining Eq.~\eqref{aiequal} with Lemma~\ref{heightzeroodd}\eqref{ht1} we see that $a_0^x(\chi)\ne 0$ is odd for characters $\chi\in\Irr(B)$ of height $0$. This proves the lemma.
\end{proof}

We remark that Olsson's conjecture in case (bb) also follows from Lemma~\ref{essential}. Moreover, in case (ab) Olsson's conjecture follows easily from Theorem~3.1 in \cite{Robinson}.

\begin{Theorem}\label{main}
In all cases we have 
\begin{align*}
k(B)&=2^m(2^{n-2}+3),&k_0(B)&=2^{m+2},&k_1(B)&=2^m(2^{n-2}-1).
\end{align*} 
Moreover,
\[l(B)=\begin{cases}1&\text{in case (bb)}\\2&\text{in case (ab)}\\3&\text{in case (aa)}\end{cases}.\]
In particular Brauer's $k(B)$-conjecture, Brauer's height zero conjecture and the Alperin-McKay conjecture hold.
\end{Theorem}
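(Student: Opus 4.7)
The plan is: handle the nilpotent case (bb) directly, then in cases (aa) and (ab) sandwich $k(B)$ between an upper bound from a well-chosen major subsection and a lower bound from the Brauer subsection sum formula.

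In case (bb), Lemma~\ref{essential} already tells us that $B$ is nilpotent, so by Puig's theorem $B$ is Morita equivalent to $FD$, and its invariants coincide with those of $D=D_{2^n}\times C_{2^m}$: the $2^m(2^{n-2}+3)$ conjugacy classes split into $2^{m+2}$ linear characters (height~$0$) and $2^m(2^{n-2}-1)$ characters of degree~$2$ (height~$1$), and $l(B)=1$.

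In cases (aa) and (ab) I would exploit the major subsection at $u:=x^{2^{n-2}}$. By Lemma~\ref{fixedpt}, $u$ is not fixed by any order-$3$ automorphism of $Q_1$ or $Q_2$; hence the fusion system of $b_u$ on $\C_D(u)=D$ is of type (bb) and $b_u$ is nilpotent, so $l(b_u)=1$. Lemma~\ref{heightzeroodd}(ii) then writes $d^u_{\chi\phi_u}=2^{h(\chi)}c_\chi$ with $c_\chi$ odd, and column orthogonality gives
\[\sum_\chi 4^{h(\chi)}c_\chi^2 \;=\; (d(u),d(u)) \;=\; |D| \;=\; 2^{n+m}.\]
Combining $c_\chi^2\ge 1$ with Olsson's bound $k_0(B)\le 2^{m+2}$ (Lemma~\ref{olsson}) and maximising $\sum_i k_i(B)$ subject to $\sum_i 4^ik_i(B)\le 2^{n+m}$ and $k_0(B)\le 2^{m+2}$ yields $k(B)\le 2^m(2^{n-2}+3)$, with equality forcing $k_0(B)=2^{m+2}$, $k_1(B)=2^m(2^{n-2}-1)$, $k_i(B)=0$ for $i\ge 2$, and $c_\chi=\pm 1$ for all $\chi$.

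For the matching lower bound I would apply $k(B)=\sum_{(v,b_v)}l(b_v)$ to the representatives of Lemma~\ref{subrep}. The non-central $(x^iz^j,b_{x^iz^j})$ with $1\le i\le 2^{n-2}-1$ are nilpotent as in the proof of Lemma~\ref{olsson}; the $(yz^j,b_{yz^j})$ in case (ab) are nilpotent since case (ab) of Lemma~\ref{essential} gives trivial fusion on $Q_1$; and the major $(x^{2^{n-2}}z^j,b)$ are nilpotent by the fusion-fixed-point argument of the preceding paragraph---so all contribute $1$. For the remaining $(z^j,b_{z^j})$, Lemma~\ref{fixedpt} places $z^j$ in the fixed points of the full fusion, so $b_{z^j}$ inherits $\mathcal{F}$ and the Kulshammer--Okuyama theory of centrally controlled blocks gives $l(b_{z^j})=l(B)$. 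Summing yields $k(B)=2^m(l(B)+2^{n-2})$ in case (aa) and $k(B)=2^m(l(B)+2^{n-2}+1)$ in case (ab). Comparison with the upper bound forces $l(B)\le 3$ respectively $l(B)\le 2$; the matching lower bound---the main obstacle---would come from the theory of lower defect groups flagged in the introduction, each of the $\mathcal{F}$-centric $\mathcal{F}$-radical subgroups $D,Q_1,Q_2$ (resp.\ $D,Q_2$) contributing at least $1$ to $l(B)$. Equality is then forced throughout, giving the stated values, and Brauer's $k(B)$-conjecture, the height zero conjecture, and Alperin--McKay follow immediately (the last by comparing $k_0(B)=2^{m+2}$ with that of the Brauer correspondent, itself a type-(bb) block).
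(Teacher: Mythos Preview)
Your upper bound via the major subsection $u=x^{2^{n-2}}$ is exactly the paper's, and the nilpotent case is fine. The gap is in your lower bound, specifically in how you treat the major subsections.

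You assert that every $b_{x^{2^{n-2}}z^j}$ is nilpotent and that every $b_{z^j}$ ``inherits $\mathcal{F}$'' with $l(b_{z^j})=l(B)$. Neither follows from Lemma~\ref{fixedpt}. That lemma only guarantees $z^{2j}\in\C_Q(\N_G(Q,b_Q))$ and $x^{2^{n-2}}z^{2j}\notin\C_Q(\N_G(Q,b_Q))$; for \emph{odd} powers of $z$ the situation depends on whether $\C_Q(\N_G(Q,b_Q))=\langle z\rangle$ or $\langle x^{2^{n-2}}z\rangle$, and in case~(aa) the two choices for $Q_1$ and $Q_2$ can even disagree. So some $b_{x^{2^{n-2}}z^{2j+1}}$ may well be non-nilpotent, and some $b_{z^{2j+1}}$ may be nilpotent. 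Your formula $k(B)=2^m(l(B)+2^{n-2})$ (resp.\ $+1$) therefore has no justification. Moreover, Külshammer--Okuyama gives only $l(B)\ge l(b_{z^j})$, not equality, and even that requires the fusion to centralize $z^j$, which you have not established for odd $j$; and knowing that $b_{z^j}$ has the same fusion type as $B$ does not by itself determine $l(b_{z^j})$---that is the very statement under proof.

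The paper avoids this by passing to the dominated block $\overline{b_v}$ of $\C_G(v)/\langle v\rangle$ for each $v\in\Z(D)\setminus\{1\}$, whose defect group is $D_{2^n}\times C_{2^{m'}}$ with $m'<m$, and computing $l(\overline{b_v})$ by induction on $m$ (the base case $m=0$ being Brauer's theorem). The total $\sum l(b_v)$ then comes out the same regardless of the ambiguity above, yielding $k(B)-l(B)=2^m(2^{n-2}+3)-2$ in case~(ab) and $-3$ in case~(aa). In case~(ab) this together with the upper bound already forces $l(B)=2$ by a small slack/parity argument; in case~(aa) one must additionally rule out $l(B)=1$, which the paper does via centrally controlled blocks for $m\ge2$ and a specific lower-defect-group computation for $m=1$. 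Your proposed lower bound ``each $\mathcal{F}$-centric $\mathcal{F}$-radical subgroup contributes~$1$ to $l(B)$'' is essentially Alperin's weight conjecture and cannot be invoked here.
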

\begin{proof}
Assume first that case (bb) occurs. Then $B$ is nilpotent and $k_i(B)$ is just the number $k_i(D)$ of irreducible characters of $D$ of degree $2^i$ ($i\ge0$) and $l(B)=1$. Since $C_{2^m}$ is abelian, we get $k_i(B)=2^mk_i(D_{2^n})$. The claim follows in this case. Thus, we assume that case (aa) or case (ab) occurs.
We determine the numbers $l(b)$ for the subsections in Lemma~\ref{subrep} and apply (6D) in \cite{Brauerstruc}. Let us begin with the nonmajor subsections. Since $\Aut_{\mathcal{F}}(\langle x,z\rangle)$ is a $2$-group, the block $b_{\langle x,z\rangle}$ with defect group $\langle x,z\rangle$ is nilpotent. Hence, we have $l(b_{x^iz^j})=1$ for all $i=1,\ldots,2^{n-2}-1$ and $j=0,1,\ldots,2^m-1$. The blocks $b_{yz^j}$ ($j=0,1,\ldots,2^m-1$) have $Q_1$ as defect group. Since $\N_G(Q_1,b_{Q_1})=\N_D(Q_1)\C_G(Q_1)$, they are also nilpotent, and it follows that $l(b_{yz^j})=1$. 

We divide the (nontrivial) major subsections into three sets:
\begin{align*}
U&:=\{x^{2^{n-2}}z^{2j}:j=0,1,\ldots,2^{m-1}-1\},\\
V&:=\{z^j:j=1,\ldots,2^m-1\},\\
W&:=\{x^{2^{n-2}}z^{2j+1}:j=0,1,\ldots,2^{m-1}-1\}.
\end{align*}
By Lemma~\ref{fixedpt} case (bb) occurs for $b_u$, and we get $l(b_u)=1$ for $u\in U$.
The blocks $b_v$ with $v\in V$ dominate unique blocks $\overline{b_v}$ of $R\C_G(v)/\langle v\rangle$ with defect group $D/\langle v\rangle\cong D_{2^n}\times C_{2^m/|\langle v\rangle|}$ such that $l(b_v)=l(\overline{b_v})$ (see Theorem~5.8.11 in \cite{Nagao} for example). The same argument for $w\in W$ gives blocks $\overline{b_w}$ with defect group $D/\langle w\rangle\cong D_{2^n}$. This allows us to apply induction on $m$ (for the blocks $b_v$ and $b_w$).
The beginning of this induction ($m=0$) is satisfied by Brauer's result (see \cite{Brauer}). Thus, we may assume $m\ge 1$.
By Theorem~1.5 in \cite{Olsson} the cases for $b_v$ (resp. $b_w$) and $\overline{b_v}$ (resp. $\overline{b_w}$) coincide.

Suppose that case (ab) occurs. By Lemma~\ref{fixedpt} case (ab) occurs for exactly $2^m-1$ blocks in $\{b_v:v\in V\}\cup\{b_w:w\in W\}$ and case (bb) occurs for the other $2^{m-1}$ blocks.
Induction gives
\[\sum_{v\in V}{l(b_v)}+\sum_{w\in W}{l(b_w)}=\sum_{v\in V}{l(\overline{b_v})}+\sum_{w\in W}{l(\overline{b_w})}=2(2^m-1)+2^{m-1}.\]
Taking all subsections together, we derive
\[k(B)-l(B)=2^m(2^{n-2}+3)-2.\]
In particular $k(B)\ge 2^m(2^{n-2}+3)-1$. Let $u:=x^{2^{n-2}}\in\Z(D)$. Lemma~\ref{heightzeroodd}\eqref{ht2} implies $2^{h(\chi)}\mid d^u_{\chi\phi_u}$ and $2^{h(\chi)+1}\nmid d^u_{\chi\phi_u}$ for $\chi\in\Irr(B)$. In particular $d^u_{\chi\phi_u}\ne 0$.
Lemma~\ref{olsson} gives
\begin{equation}\label{k*ineq}
2^{n+m}-4\le k_0(B)+4(k(B)-k_0(B))\le\sum_{\chi\in\Irr(B)}{\bigl(d^u_{\chi\phi_u}\bigr)^2}=(d(u),d(u))=|D|=2^{n+m}. 
\end{equation}
Hence, we have 
\[d^u_{\chi\phi_u}=\begin{cases}\pm1&\text{if }h(\chi)=0\\\pm2&\text{otherwise}\end{cases},\] 
and the claim follows in case (ab).

Now suppose that case (aa) occurs. Then by the same argument as in case (ab) we have
\[\sum_{v\in V}{l(b_v)}+\sum_{w\in W}{l(b_w)}=\sum_{v\in V}{l(\overline{b_v})}+\sum_{w\in W}{l(\overline{b_w})}=3(2^m-1)+2^{m-1}.\]
Observe that this sum does not depend on which case actually occurs for $b_z$ (for example). In fact all three cases for $b_z$ are possible.
Taking all subsections together, we derive
\[k(B)-l(B)=2^m(2^{n-2}+3)-3.\]
Here it is not clear a priori whether $l(B)>1$. Brauer delayed the discussion of the possibility $l(B)=1$ until section~7 of \cite{Brauer}. Here we argue differently via lower defect groups and centrally controlled blocks. 
First we consider the case $m\ge2$. By Lemma~\ref{fixedpt} we have $\langle D,\N_G(Q_1,b_{Q_1}),\N_G(Q_2,b_{Q_2})\rangle\subseteq\C_G(z^2)$, i.\,e. $B$ is centrally controlled (see \cite{KuelshammerOkuyama}). By Theorem~1.1 in \cite{KuelshammerOkuyama} we get $l(B)\ge l(b_{z^2})=3$. Hence, the claim follows with Ineq.~\eqref{k*ineq}.

Now consider the case $m=1$. By Lemma~\ref{fixedpt} there is a (unique) nontrivial fixed point $u\in\Z(D)$ of $\N_G(Q_1,b_{Q_1})$. Then $l(b_u)>1$.
By Proposition~(4G) in \cite{Brauer} the Cartan matrix of $b_u$ has $2$ as an elementary divisor. With the notation of \cite{OlssonLDG} we have $m_{b_u}^{(1)}(Q)\ge1$ for some $Q\le\C_G(u)=\N_G(\langle u\rangle)$ with $|Q|=2$ (see the remark on page 285 in \cite{OlssonLDG}). In particular $Q$ is a lower defect group of $b_u$ (see Theorem~(5.4) in \cite{OlssonLDG}). Since $\langle u\rangle\le\Z(\C_G(u))$, Corollary~(3.7) in \cite{OlssonLDG} implies $Q=\langle u\rangle$. By Theorem~(7.2) in \cite{OlssonLDG} we have $m_B^{(1)}(\langle u\rangle)\ge 1$. In particular $2$ occurs as elementary divisor of the Cartan matrix of $B$. This shows $l(B)\ge 2$. Now the claim follows again with Ineq.~\eqref{k*ineq}.
\end{proof}

We add some remarks. For trivial reasons also Eaton's conjecture is satisfied which provides a generalization of Brauer's $k(B)$-conjecture and Olsson's conjecture (see \cite{Eaton}). Brauer's $k(B)$-conjecture already follows from Theorem~2 in \cite{SambalekB}. The principal blocks of $D$, $S_4\times C_{2^m}$ and $\GL(3,2)\times C_{2^m}$ give examples for the cases (bb), (ab) and (aa) respectively (at least for $n=3$). Moreover, the principal block of $S_6$ shows that also $\C_{Q_1}(\N_G(Q_1,b_{Q_1}))\ne\C_{Q_2}(\N_G(Q_2,b_{Q_2}))$ is possible in case (aa). This gives an example, where $B$ is not centrally controlled (and $m=1$). However, $B$ cannot be a block of maximal defect of a simple group for $m\ge 1$ by the main theorem in \cite{Haradasylow}.

\section{Alperin's weight conjecture}

Alperin's weight conjecture asserts that $l(B)$ is the number of conjugacy classes of weights for $B$. Here a weight is a pair $(Q,\beta)$, where $Q$ is a $2$-subgroup of $G$ and $\beta$ is a block of $R[\N_G(Q)/Q]$ with defect $0$. Moreover, $\beta$ is dominated by a Brauer correspondent $b$ of $B$ in $R\N_G(Q)$. 

\begin{Theorem}
Alperin's weight conjecture holds for $B$.
\end{Theorem}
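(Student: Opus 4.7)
The plan is to compare the number $w(B)$ of $G$-conjugacy classes of $B$-weights with the values of $l(B)$ determined in Theorem~\ref{main}. In case (bb), $B$ is nilpotent by Lemma~\ref{essential}, so Alperin's weight conjecture follows from the general theory of nilpotent blocks, giving $w(B)=1=l(B)$.

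In the remaining cases I would invoke the standard local reformulation of Alperin's conjecture via Brauer subpairs: $w(B)$ equals the sum, over $G$-conjugacy classes of $B$-subpairs $(Q,b_Q)$ with $Q$ an $\mathcal{F}$-radical, $\mathcal{F}$-centric subgroup of $D$, of the number of $2$-defect zero irreducible characters of the inertial quotient $\N_G(Q,b_Q)/\C_G(Q)$. The associated $2$-cocycle is trivial in our setting since the inertial quotients that occur, namely $1$ and $S_3$, both have trivial Schur multiplier ($\cohom^2(S_3,F^\times)=0$). By Lemma~\ref{essential} the subgroups in play lie in $\{D,Q_1,Q_2\}$: the subgroup $D$ always contributes $1$ because $\Aut_{\mathcal{F}}(D)$ is trivial (as $e(B)=1$), and $Q_i$ contributes precisely when $\Aut_{\mathcal{F}}(Q_i)\cong S_3$, in which case its contribution is $1$ (coming from the unique two-dimensional irreducible character of $S_3$, which has $2$-defect zero). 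Summing yields $w(B)=2$ in cases (ab), (ba) and $w(B)=3$ in case (aa), matching $l(B)$ in each case.

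The main obstacle is invoking the local reformulation itself and verifying that the relevant $2$-cocycle is trivial. The latter is immediate from the vanishing of the Schur multiplier of $S_3$; the former is a routine reduction in the theory of blocks with known fusion system, and once it is in place, the count is an immediate consequence of Lemma~\ref{essential} and $e(B)=1$, requiring no further structural information beyond what has already been assembled for Theorem~\ref{main}.
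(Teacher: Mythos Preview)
Your approach is the same as the paper's: invoke the local reformulation (the paper cites Proposition~5.4 in \cite{Kessar}), observe via Lemma~\ref{essential} that the only $\mathcal{F}$-centric $\mathcal{F}$-radical subgroups are $D$ and those $Q_i$ with $\Aut_{\mathcal{F}}(Q_i)\cong S_3$, note that the relevant outer automorphism groups ($1$ and $S_3$) have trivial Schur multiplier and each contribute exactly one defect-zero character, and compare with Theorem~\ref{main}.

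There is, however, a slip in your formulation. The quantity that enters the weight count is the number of $2$-defect-zero projective characters of $\Out_{\mathcal{F}}(Q)=\N_G(Q,b_Q)/Q\C_G(Q)$, not of $\N_G(Q,b_Q)/\C_G(Q)=\Aut_{\mathcal{F}}(Q)$. For the abelian subgroups $Q_1,Q_2$ the two coincide, so your count there is fine; but for $Q=D$ they differ. Since $e(B)=1$ we have $\N_G(D,b_D)=D\C_G(D)$, so $\Aut_{\mathcal{F}}(D)\cong D/\Z(D)$ is a nontrivial $2$-group (hence has \emph{no} defect-zero characters), whereas $\Out_{\mathcal{F}}(D)=1$. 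Thus your assertion that ``$\Aut_{\mathcal{F}}(D)$ is trivial'' is false, and with your stated formula $D$ would contribute $0$ rather than $1$. Replacing $\Aut_{\mathcal{F}}$ by $\Out_{\mathcal{F}}$ throughout fixes both the formula and the claim about $D$, and the rest of your argument then goes through unchanged.
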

\begin{proof}
We use Proposition~5.4 in \cite{Kessar}. For this, let $Q\le D$ be $\mathcal{F}$-centric and $\mathcal{F}$-radical. By Lemma~\ref{essential} we have $\Out_{\mathcal{F}}(Q)\cong S_3$ or $\Out_{\mathcal{F}}(Q)=1$ (if $Q=D$). In particular $\Out_{\mathcal{F}}(Q)$ has trivial Schur multiplier. Moreover, $F\Out_{\mathcal{F}}(Q)$ has precisely one block of defect $0$. Now the claim follows from Theorem~\ref{main} and Proposition~5.4 in \cite{Kessar}.
\end{proof}

\section{Ordinary weight conjecture}
In this section we prove Robinson's ordinary weight conjecture (OWC) for $B$ (see \cite{OWC}). If OWC holds for all groups and all blocks, then also Alperin's weight conjecture holds. However, for our particular block $B$ this implication is not known. In the same sense OWC is equivalent to Dade's projective conjecture (see \cite{Eaton}). Uno has proved Dade's invariant conjecture in the case $m=0$ (see \cite{UnoDade}). 
For $\chi\in\Irr(B)$ let $d(\chi):=n+m-h(\chi)$ be the \emph{defect} of $\chi$. We set $k^i(B)=|\{\chi\in\Irr(B):d(\chi)=i\}|$ for $i\in\mathbb{N}$. 

\begin{Theorem}
The ordinary weight conjecture holds for $B$.
\end{Theorem}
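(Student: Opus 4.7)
The plan is to verify Robinson's formulation of OWC by computing the conjectured count $w(B,d)$ as a sum of local contributions indexed by $\mathcal{F}$-conjugacy classes of $\mathcal{F}$-centric $\mathcal{F}$-radical subgroups of $D$, and to compare with the values of $k^d(B)$ extracted from Theorem~\ref{main}. Since $d(\chi)=n+m-h(\chi)$, Theorem~\ref{main} tells us that the only nonzero contributions are $k^{n+m}(B)=k_0(B)=2^{m+2}$ and $k^{n+m-1}(B)=k_1(B)=2^m(2^{n-2}-1)$. By Lemma~\ref{essential} the relevant subgroups (up to $\mathcal{F}$-conjugacy) are $D$ itself, together with $Q_1$ and/or $Q_2$ in the cases where their $\mathcal{F}$-automizer is $S_3$; the subgroup $D$ always qualifies since $e(B)=1$.

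In case (bb) the block $B$ is nilpotent, so only $Q=D$ contributes and OWC reduces to the analogous identity for the abelian-type group $D=D_{2^n}\times C_{2^m}$. This is immediate from the defect-preserving bijection $\Irr(B)\to\Irr(D)$ together with direct inspection of $\Irr(D)$ ($2^{m+2}$ linear characters, $2^m(2^{n-2}-1)$ characters of degree $2$). For cases (aa) and (ab) I would compute the local contribution at each $\mathcal{F}$-centric radical subgroup separately. At $Q=D$, the automizer is trivial, so the contribution equals the number of characters of $Z(D)=\langle x^{2^{n-2}},z\rangle\cong C_2\times C_{2^m}$, all of them contributing at the maximal defect $n+m$, giving $2^{m+1}$ weights. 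At $Q=Q_i$ with $\Out_{\mathcal{F}}(Q_i)\cong S_3$, I would use Lemma~\ref{fixedpt} to pin down the $S_3$-action on $Z(Q_i)=Q_i\cong C_2^2\times C_{2^m}$: the fixed subgroup has order $2^m$, while $S_3$ acts without fixed points on the quotient $Q_i/\C_{Q_i}(\N_G(Q_i,b_{Q_i}))\cong C_2^2$. The resulting partition of $\Irr(Z(Q_i))$ into $S_3$-orbits, together with the fact that $S_3$ has trivial Schur multiplier and that $FS_3$ has a unique block of defect zero, determines the local contributions cleanly: the $S_3$-fixed characters contribute weights at defect $n+m$ and the regular $S_3$-orbits contribute at defect $n+m-1$.

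Summing gives the desired totals: in case (ab) the $Q_2$-contribution makes up the difference $k_0(B)-2^{m+1}=2^{m+1}$ and all of $k_1(B)$; in case (aa) the symmetric contributions at $Q_1$ and $Q_2$ must both be accounted for with the same formula. The main obstacle is the careful bookkeeping of these $S_3$-orbits on $\Irr(Z(Q_i))$ and, in case (aa), verifying that the two subgroups $Q_1$ and $Q_2$ (which need not have the same $\C_{Q_i}(\N_G(Q_i,b_{Q_i}))$ as noted after Lemma~\ref{fixedpt}) contribute consistently; the technical checks on the Schur multiplier and the unique defect-zero block of $FS_3$ are standard, as in the proof of Alperin's weight conjecture above. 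Once the local sums are shown to equal $k^{n+m}(B)$ and $k^{n+m-1}(B)$, OWC for $B$ follows.
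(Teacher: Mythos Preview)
Your plan has a genuine gap that makes the bookkeeping impossible. In the formulation you are invoking (Conjecture~6.5 in \cite{Kessar}), the local term $\textbf{w}(Q,d)$ at an $\mathcal{F}$-centric radical $Q$ is an \emph{alternating sum over normal chains} of $2$-subgroups of $\Out_{\mathcal{F}}(Q)$, each summand counting $I(\sigma)$-orbits on $\Irr(Q)$ (not $\Irr(\Z(Q))$), and a character $\mu\in\Irr(Q)$ contributes only at defect $d=\log_2\bigl(|Q|/\mu(1)\bigr)$. Both features are absent from your outline, and both are essential.

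For $Q=D$ with $\Out_{\mathcal{F}}(D)=1$ there is only the trivial chain, so $\textbf{w}(D,d)=k^d(D)$; by Theorem~\ref{main} this already equals $k^d(B)$ for every $d$. Thus what remains is to show $\textbf{w}(Q_i,d)=0$ whenever $\Out_{\mathcal{F}}(Q_i)\cong S_3$. Since $Q_i$ is abelian of order $2^{m+2}$, every $\mu\in\Irr(Q_i)$ has defect $m+2$, so only $d=m+2$ is at stake (and for $n\ge 4$ this is neither $n+m$ nor $n+m-1$). The vanishing comes precisely from the cancellation between the two chains $\sigma:1$ and $\tau:1<C_2$ in $\mathcal{N}_{Q_i}$: under $I(\sigma)\cong S_3$ the characters of $Q_i$ split into $2^m$ fixed points (each giving $w=1$) and $2^m$ orbits of length $3$ (each giving $w=0$), while under $I(\tau)\cong C_2$ they split into $2^{m+1}$ fixed points ($w=0$) and $2^m$ orbits of length $2$ ($w=1$); the signed total is $2^m-2^m=0$.

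Your alternative accounting cannot close numerically: you ask a single $Q_2$, which has only $2^{m+2}$ characters, to supply all of $k_1(B)=2^m(2^{n-2}-1)$, a quantity that grows with $n$ while $|Q_2|$ does not. The height-$1$ characters of $B$ are matched entirely by the degree-$2$ characters of $D$ inside $\textbf{w}(D,n+m-1)$, not by any $Q_i$; and the contribution at $D$ involves $\Irr(D)$, not $\Irr(\Z(D))$.
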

\begin{proof}
We prove the version in Conjecture~6.5 in \cite{Kessar}. For this, let $Q\le D$ be $\mathcal{F}$-centric and $\mathcal{F}$-radical. In the case $Q=D$ we have $\Out_{\mathcal{F}}(D)=1$ and $\mathcal{N}_D$ consists only of the trivial chain (with the notations of \cite{Kessar}). Then it follows easily that $\textbf{w}(D,d)=k^d(D)=k^d(B)$ for all $d\in\mathbb{N}$. Now let $Q\in\{Q_1,Q_2\}$ such that $\Out_{\mathcal{F}}(Q)=\Aut_{\mathcal{F}}(Q)\cong S_3$. It suffices to show that $\textbf{w}(Q,d)=0$ for all $d\in\mathbb{N}$. Since $Q$ is abelian, we have $\textbf{w}(Q,d)=0$ unless $d=m+2$. Thus, let $d=m+2$. Up to conjugation $\mathcal{N}_Q$ consists of the trivial chain $\sigma:1$ and the chain $\tau:1<C$, where $C\le\Out_{\mathcal{F}}(Q)$ has order $2$. 

We consider the chain $\sigma$ first. Here $I(\sigma)=\Out_{\mathcal{F}}(Q)\cong S_3$ acts faithfully on $\Omega(Q)\cong C_2^3$ and thus fixes a four-group. Hence, the characters in $\Irr(Q)$ split in $2^m$ orbits of length $3$ and $2^m$ orbits of length $1$ under $I(\sigma)$ (see also Lemma~\ref{fixedpt}). For a character $\chi\in\Irr(D)$ lying in an orbit of length $3$ we have $I(\sigma,\chi)\cong C_2$ and thus $w(Q,\sigma,\chi)=0$. For the $2^m$ stable characters $\chi\in\Irr(D)$ we get $w(Q,\sigma,\chi)=1$, since $I(\sigma,\chi)=\Out_{\mathcal{F}}(Q)$ has precisely one block of defect $0$.

Now consider the chain $\tau$. Here $I(\tau)=C$ and the characters in $\Irr(Q)$ split in $2^m$ orbits of length $2$ and $2^{m+1}$ orbits of length $1$ under $I(\tau)$. For a character $\chi\in\Irr(D)$ in an orbit of length $2$ we have $I(\tau,\chi)=1$ and thus $w(Q,\tau,\chi)=1$. For the $2^{m+1}$ stable characters $\chi\in\Irr(D)$ we get $I(\tau,\chi)=I(\tau)=C$ and $w(Q,\tau,\chi)=0$.

Taking both chains together, we derive
\[\textbf{w}(Q,d)=(-1)^{|\sigma|+1}2^m+(-1)^{|\tau|+1}2^m=2^m-2^m=0.\]
This proves OWC.
\end{proof}

\section{The gluing problem}
Finally we show that the gluing problem (see Conjecture~4.2 in \cite{gluingprob}) for the block $B$ has a unique solution. This was done for $m=0$ in \cite{Parkgluing}. We will not recall the very technical statement of the gluing problem. Instead we refer to \cite{Parkgluing} for most of the  notations. Observe that the field $F$ is denoted by $k$ in \cite{Parkgluing}.

\begin{Theorem}
The gluing problem for $B$ has a unique solution.
\end{Theorem}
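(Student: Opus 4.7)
The plan is to adapt the approach of \cite{Parkgluing}, which settled the case $m=0$, by showing that the coefficient functor governing the gluing problem is identically zero. Following the formulation in \cite{gluingprob}, both existence and uniqueness of solutions are controlled by the (higher) limits over the orbit category of $\mathcal{F}$-centric subgroups of the contravariant functor $\mathcal{A}^2_{\mathcal{F}}\colon Q\mapsto\cohom^2(\Aut_{\mathcal{F}}(Q);F^\times)$. If $\mathcal{A}^2_{\mathcal{F}}$ vanishes pointwise, every higher limit vanishes and the unique solution is the trivial one.

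First I would identify the relevant automizers. By Lemma~\ref{essential}, every proper $\mathcal{F}$-centric, $\mathcal{F}$-radical subgroup of $D$ is $\mathcal{F}$-conjugate to $Q_1$ or $Q_2$, with automizer either a $2$-group or isomorphic to $S_3$. Moreover $\Aut_{\mathcal{F}}(D)\le\Aut(D)$ is a $2$-group by Lemma~\ref{aut}. Together with Alperin's fusion theorem, this should imply that for every $\mathcal{F}$-centric subgroup $Q\le D$ the automizer $\Aut_{\mathcal{F}}(Q)$ has order $2^a3^b$ for some $a,b\in\mathbb{N}_0$, and that its Sylow $3$-subgroup is cyclic of order at most $3$.

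Next I would verify that $\cohom^2(G;F^\times)=0$ for every finite group $G$ with these properties (acting trivially on $F^\times$). For the $2$-part, since $F$ is algebraically closed of characteristic $2$ the squaring map on $F^\times$ is bijective, so $\cohom^n(G_2;F^\times)=0$ for any Sylow $2$-subgroup $G_2$ and all $n\ge 1$; by the Cartan--Eilenberg stable element theorem the $2$-primary component of $\cohom^2(G;F^\times)$ vanishes. For the $3$-part, the same theorem yields an injection into $\cohom^2(G_3;F^\times)$, and since $G_3$ is cyclic we have $\cohom^2(G_3;F^\times)=F^\times/(F^\times)^{|G_3|}=0$ by the algebraic closedness of $F$. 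Combining gives $\cohom^2(G;F^\times)=0$.

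The main obstacle I anticipate is confirming that the Sylow $3$-subgroup of $\Aut_{\mathcal{F}}(Q)$ really is cyclic of order at most $3$ for every $\mathcal{F}$-centric $Q$, not just for the radical representatives $Q_1,Q_2$. I would handle this via Alperin's fusion theorem: every morphism in $\mathcal{F}$ is a composition of restrictions of automizers of $\mathcal{F}$-radical centric subgroups, so any $3$-element of $\Aut_{\mathcal{F}}(Q)$ lifts along such a restriction to a $3$-element of $\Aut_{\mathcal{F}}(Q_1)$ or $\Aut_{\mathcal{F}}(Q_2)$, both of which have Sylow $3$-subgroup $C_3$. Once this is in place, $\mathcal{A}^2_{\mathcal{F}}\equiv 0$, all of its limits vanish, and the gluing problem has a unique solution.
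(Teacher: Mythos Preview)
Your overall strategy---show that the relevant coefficient functors vanish identically, so all their limits vanish---is the same as the paper's, but your formulation has a genuine gap. In Park's set-up (Theorem~1.1 of \cite{Parkgluing}, which the paper invokes) the gluing problem is controlled by cohomology over the subdivision category $[S(\mathcal{F}^c)]$ of \emph{chains} of $\mathcal{F}$-centric subgroups, and one needs \emph{both} $\cohom^0([S(\mathcal{F}^c)],\mathcal{A}^2_{\mathcal{F}})=0$ and $\cohom^1([S(\mathcal{F}^c)],\mathcal{A}^1_{\mathcal{F}})=0$. Showing $\mathcal{A}^2_{\mathcal{F}}\equiv 0$ forces the compatible family of $2$-cocycles to be trivial, but it does not by itself force the lift to $\cohom^2$ of the orbit category to be unique; that is exactly what the $\mathcal{A}^1$-term governs. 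So you must also check $\cohom^1(\Aut_{\mathcal{F}}(\sigma);F^\times)=0$ for every chain $\sigma$. Fortunately your own arguments extend: for $2$-groups this is immediate, and for $S_3$ one has $\cohom^1(S_3;F^\times)=\Hom(S_3,F^\times)=\Hom(C_2,F^\times)=0$ since $F^\times$ has no $2$-torsion in characteristic~$2$.

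A second, smaller issue is how you determine the automizers. The paper does not go through Alperin's fusion theorem to bound the $3$-part of $\Aut_{\mathcal{F}}(Q)$; it simply observes that any $\mathcal{F}$-centric $Q$ satisfies $Q=(Q\cap\langle x,y\rangle)\times\langle z\rangle$, and then either $Q\cap\langle x,y\rangle$ is nonabelian and $\Aut(Q)$ is a $2$-group by Lemma~\ref{aut}, or $Q$ is one of $Q_1,Q_2,\langle x,z\rangle$, where Lemma~\ref{essential} (and the proof of Lemma~\ref{subrep}) already pin down $\Aut_{\mathcal{F}}(Q)$ as a $2$-group or $S_3$. Your proposed ``lifting'' of a $3$-element of $\Aut_{\mathcal{F}}(Q)$ to some $\Aut_{\mathcal{F}}(Q_i)$ is not what Alperin's theorem provides (it gives factorisations of morphisms, not extensions of automorphisms), so that step would need a different justification; the direct structural argument is both shorter and safer. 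Note also that for chains $\sigma$ with largest member $Q$, one has $\Aut_{\mathcal{F}}(\sigma)\le\Aut_{\mathcal{F}}(Q)$, and if $Q\in\{Q_1,Q_2\}$ then $\sigma=(Q)$ since $Q$ contains no proper $\mathcal{F}$-centric subgroup; this is how the paper passes from subgroups to chains.
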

\begin{proof}
We will show that $\cohom^i(\Aut_{\mathcal{F}}(\sigma),F^\times)=0$ for $i=1,2$ and every chain $\sigma$ of $\mathcal{F}$-centric subgroups of $D$. Then it follows that $\mathcal{A}_{\mathcal{F}}^i=0$ and $\cohom^0([S(\mathcal{F}^c)],\mathcal{A}^2_{\mathcal{F}})=\cohom^1([S(\mathcal{F}^c)],\mathcal{A}^1_{\mathcal{F}})=0$. Hence, by Theorem~1.1 in \cite{Parkgluing} the gluing problem has only the trivial solution.

Let $Q\le D$ be the largest ($\mathcal{F}$-centric) subgroup occurring in $\sigma$. Then as in the proof of Lemma~\ref{essential} we have $Q=(Q\cap\langle x,y\rangle)\times\langle z\rangle$. If $Q\cap\langle x,y\rangle$ is nonabelian, $\Aut(Q)$ is a $2$-group by Lemma~\ref{aut}. In this case we get $\cohom^i(\Aut_{\mathcal{F}}(\sigma),F^\times)=0$ for $i=1,2$ (see proof of Corollary~2.2 in \cite{Parkgluing}). Hence, we may assume that $Q\in\{Q_1,Q_2\}$ and $\Aut_{\mathcal{F}}(Q)\cong S_3$ (see proof of Lemma~\ref{subrep} for the case $Q=\langle x,z\rangle$). Then $\sigma$ only consists of $Q$ and $\Aut_{\mathcal{F}}(\sigma)=\Aut_{\mathcal{F}}(Q)$. Hence, also in this case we get $\cohom^i(\Aut_{\mathcal{F}}(\sigma),F^\times)=0$ for $i=1,2$.
\end{proof}

It seems likely that one can prove similar results about blocks with defect group $Q_{2^n}\times C_{2^m}$ or $SD_{2^n}\times C_{2^m}$, where $Q_{2^n}$ denotes the quaternion group and $SD_{2^n}$ denotes the semidihedral group of order $2^n$. This would generalize Olsson's results for $m=0$ (see \cite{Olsson}).

\section*{Acknowledgment}
I am very grateful to the referee for some valuable comments. 
This work was partly supported by the “Deutsche Forschungsgemeinschaft”.

\end{document}